\newtheorem{theorem}{Theorem}
\theoremstyle{plain}
\newtheorem{case}{Case}
\newtheorem{conjecture}{Conjecture}
\newtheorem{corollary}{Corollary}
\newtheorem{proposition}{Proposition}
\newtheorem{dupa}{Case}
\numberwithin{equation}{section}
\begin{document}
	\title{Variations on shuffle squares}
	
	\author{Jaros\l aw Grytczuk}
	\address{Faculty of Mathematics and Information Science, Warsaw University
		of Technology, 00-662 Warsaw, Poland}
	\email{j.grytczuk@mini.pw.edu.pl}
	
	\author{Bart\l omiej Pawlik}
	\address{Institute of Mathematics, Silesian University of Technology, 44-100 Gliwice, Poland}
	\email{bpawlik@polsl.pl}
	
	\author{Mariusz Pleszczy\'nski}
	\address{Institute of Mathematics, Silesian University of Technology, 44-100 Gliwice, Poland}
	\email{mariusz.pleszczynski@polsl.pl}

	\begin{abstract}
		We study decompositions of words into subwords that are in some sense similar, which means that one subword may be obtained from the other by a relatively simple transformation. Our main inspiration are \emph{shuffle squares}, an intriguing class of words arising in various contexts, from purely combinatorial to more applied, like modeling concurrent processes or DNA sequencing. These words can be split into two parts that are just identical. For example, ${\color{red}{ACT}\color{blue}{AC}\color{red}{A}\color{blue}{TAG}\color{red}{G}}$ is a shuffle square consisting of two copies of the word $ACTAG$. Of course, each letter must appear any even number of times in each shuffle square. We call words with that property \emph{even}.

We mainly discuss new problems concerning generalized shuffle squares. We propose a number of conjectures and provide some initial results towards them. For instance, we prove that every binary word is a \emph{cyclic shuffle square}, meaning that it splits into two subwords, one of which is a~cyclic permutation of the other. The same statement is no longer true over larger alphabets, but it seems plausible that a similar property should hold with slightly less restricted permutation classes (depending on the alphabet size). For instance, we conjecture that every even \emph{ternary} word is a \emph{dihedral} shuffle square, which means that it splits into two subwords, one of which can be obtained from the other by a permutation corresponding to the~symmetry of a~regular polygon. We propose a general conjecture stating that a linear number of permutations is sufficient to express all even $k$-ary words as generalized shuffle squares.

Our discussion is complemented by some enumerative and computational experiments. In particular, we disprove our former conjecture stating that every even binary word can be turned into a shuffle square by a cyclic permutation. The smallest counterexample has length $24$. We call words of this type \emph{shuffle anti-squares}. We determined all of them up to the length $28$.
	\end{abstract}

	\maketitle
	
	\section{Introduction}
	
	The phenomena of repeatability have fascinated researchers in various fields of science since time immemorial. One spectacular example in mathematics, from the beginning of the last century, is the work of Axel Thue \cite{Thue,Thue2} on repetitions in words. His discoveries provided the basis for the theory of formal languages, which in turn constituted the foundation of computer science (see \cite{BerstelThue,Lothaire}). One of these results can be stated in simple terms as follows.
	
	Notice first that every binary word of length at least four must contain one of the four blocks, $\mathtt{00}$, $\mathtt{11}$, $\mathtt{0101}$, or $\mathtt{1010}$, as a contiguous segment. Indeed, to avoid a single-bit repetition in a sequence of bits, we are forced to alternate them, which leads to the repetition of $\mathtt{01}$ or $\mathtt{10}$ in the fourth step. Surprisingly, if we have just one more symbol in the alphabet, then, as proved by Thue \cite{Thue}, we are able to construct arbitrarily long \emph{ternary} words without segments of such repetitive form of any possible length. For instance, the word $$\mathtt{012102120102012021201021012}$$ has no segment of the form $SS$, whatever $S$ may be. Due to their algebraic connotations, segments of this repetitive type are called \emph{squares}, while words avoiding them are called \emph{square-free}.
	
	Thue's theorem inspired a huge amount of research, giving birth to an entire mathematical discipline known as \emph{combinatorics on words} (see \cite{BeanEM,BerstelPerrin,Lothaire,LothaireAlgebraic}. It also has many important connections and applications, not only in pure mathematics or theoretical computer science. For instance, the same structure of repeated segments in sequences is an object of intensive studies in genetics. Indeed, one of fundamental mechanisms of gene mutation, known as \emph{tandem duplication}, relies on copying a segment of a DNA sequence and inserting the resulting copy right after the original template, creating in this way a \emph{tandem repeat} (a square) (see \cite{AlonBFJ,JainHSB,Zhang}). For instance, by copying the underlined segment, the sequence $GTG\underline{CAT}CTGA$ mutates into the form $GTG\underline{CATCAT}CTGA$ with the underlined tandem repeat. It is believed that in general, gene duplication is the main driving force behind the evolution of organisms, where the majority of essential mutations are believed to be of the tandem duplication type. In particular, around $3\%$ of the human genome are formed of tandem repeats (see \cite{Lander}).

Our main inspiration in the present paper is a more general variant of repetitive structure in words, known as \emph{shuffle squares}. These are words that can be split into two copies of the same word. For instance, $\color{red}{AT}\color{blue}{A}\color{red}{C}\color{blue}{T}\color{red}{G}\color{blue}{CG}$ is a shuffle square consisting of two copies of the word $ATCG$. Notice that any shuffle square must be an \emph{even word}, that is, a word in which every letter occurs an even number of times. Let us mention that an analog of the result of Thue for shuffle squares is known to hold over a $6$-letter alphabet, as proved recently by Bulteau, Jug\'{e}, and Vialette \cite{Bul2023}. This means that there are arbitrarily long words built of six different letters not containing shuffled squares as segments. The least number of letters needed for that property is however not known.

The study of word shuffling in formal language theory was initiated by Ginsburg and Spanier [6]. However, further investigations were also motivated by applications to modeling sequential execution of concurrent processes (see \cite{Riddle1,Riddle2,Shaw}). Shuffle squares were introduced by Henshall, Rampersad and Shallit \cite{Hen2012}, who mainly studied the problem of their enumeration. This problem seems hard. Even for the smallest binary case, the counting function is not known, even asymptotically. It was only recently proved by He, Huang, Nam, and Thaper \cite{He2024} that the number of binary shuffle squares of length $2n$ is at least ${2n\choose n}$ for $n\geqslant3$. An intriguing conjecture posed in \cite{He2024} states that the vast majority of even binary words are shuffle squares, in a sense that their fraction among even words of fixed length tends to unity with the length growing to infinity.

Another striking result about shuffle squares was obtained by Axenovich, Person, and Puzynina \cite{Axenovich-Person-Puzynina}. It states that every binary word of length $n$ is a shuffle square up to the deletion of $o(n)$ letters. A precise asymptotic formula for the maximum number of letters to be deleted is not known. It is also not known if a similar statement is true for ternary words.

It also comes naturally to study computational problems for shuffle squares. It was proved by Buss and Soltys \cite{Buss-Soltys} that recognizing shuffle squares is NP-complete for sufficiently large fixed alphabet. Recently, Bulteau and Vialette \cite{Bul2020} demonstrated that the same holds for a binary alphabet.

In this paper we introduce a generalization of shuffle squares, in which we are satisfied with splitting of a given word into two subwords that are sufficiently \emph{similar}. The degree of similarity of words is measured by complexity of a permutation needed to transform one word into another. For instance, two words are \emph{cyclically similar} if one is a cyclic shift of the other.

Our main result (Theorem \ref{Theorem Cyclic Shuffle Squares}) states that every even binary word is a \emph{cyclic shuffle square}, which means that it can be split into two cyclically similar subwords. This statement may seem surprising, but the proof is very simple by using the famous Necklace Splitting Theorem, due to Goldberg and West \cite{Goldberg-West} (see \cite{Alon-West} for a short and elegant proof based on the Borsuk-Ulam Theorem).

The same statement does not hold for ternary words, however, we conjecture that only slightly more complicated, \emph{dihedral} permutations suffice for an analogous goal (Conjecture \ref{Conjecture Ternary Dihedral}). In general, we suspect that for every fixed alphabet, a linear number of permutations should be sufficient to express the analogous generalized shuffle square property (Conjecture \ref{Conjecture Main}). We support these conjectures by presenting results of various computer experiments as well as some theoretical observations. In the final part of the paper we propose some directions for future research and some connections to widely studied topics, like \emph{Gauss codes}, \emph{DNA sequencing}, and \emph{circle graphs}.

\section{Problems and results}

\subsection{Notation and terminology}
Let $\mathbb{A}$ be a fixed alphabet and let $W$ be a word over $\mathbb{A}$. A subword of $W$ is any word obtained by deleting some (possibly zero) letters of $W$. For example, $\mathtt{attic}$ is a subword of $\mathtt{m\color{blue}{at}\color{black}{hema}\color{blue}{tic}\color{black}{s}}$. A special type of subword consisting of consecutive letters is called a \emph{factor} of $W$. Thus, a word $F$ is a factor of $W$ if $W=PFS$ for some (possible empty) words $P$ and $S$ (called a~{\it prefix} and a \emph{suffix} of $W$, respectively). For instance, $\mathtt{thema}$ is a factor of $\mathtt{ma\color{blue}{thema}\color{black}{tics}}$. A \emph{square} is a word $W=UU$ for some nonempty word $U$ and a \emph{shuffle square} is a word that can be split into two identical disjoint subwords. For example, the word $\mathtt{\color{red}{hots}\color{blue}{hots}}$ is a square, while $\mathtt{\color{red}{tu}\color{blue}{t}\color{red}{e}\color{blue}{u}\color{red}{r}\color{blue}{er}}$ is a shuffle square, but not a square. Clearly, every letter in a shuffle square must occur an even number of times. We will call any word with this property an \emph{even} word.

In the present paper we introduce the following generalization of squares and shuffle squares. Let us start with looking at the word $\mathtt{rattar}$. It is not a square, but it is a~concatenation of two words that are in some sense similar --- one is just a \emph{reverse} of the other. To be more formal, let $\gamma$ be a permutation of the set $[n]=\{1,2,\ldots,n\}$ denoted as a sequence $\gamma=\gamma_1\gamma_2\cdots \gamma_n$, with $\gamma_i\in [n]$. If $W=w_1w_2\cdots w_n$ is a word of length $n$, then $\gamma(W)=w_{\gamma_1}w_{\gamma_2}\cdots w_{\gamma_n}$ is an effect of applying $\gamma$ to the word $W$. For instance, if $W=\mathtt{sword}$ and $\gamma=23451$, then $\gamma(W)=\mathtt{words}$.

Two words, $U=u_1u_2\cdots u_n$ and $V=v_1v_2\cdots v_n$, are called \emph{$\gamma$-similar} if $U=\gamma(V)$ or $V=\gamma(U)$. For example, the words $U=\mathtt{braze}$ and $V=\mathtt{zebra}$ are $\gamma$-similar with $\gamma=34512$. A \emph{$\gamma$-square} is just a word of the form $UV$, where $U$ and $V$ are $\gamma$-similar. A word $W$ is a~\emph{shuffle $\gamma$-square} if it can be split into two subwords that are $\gamma$-similar.

Of course, every even word is a shuffle $\gamma$-square for some permutation $\gamma$. One naturally wonders how ``complex'' this permutation must be. Our ultimate goal is to express all even $k$-ary words as shuffle $\gamma$-squares with some restricted class of permutations $\gamma$.

\subsection{Cyclic shuffle squares}
In this subsection we prove that every even binary word is a~shuffle $\gamma$-square, where $\gamma$ is a~cyclic permutation. The proof is based on a beautiful theorem of Goldberg and West \cite{Goldberg-West, Alon-West} on the fair splitting of necklaces, which is formulated below.

A \emph{necklace} is just a word $N$ over a finite alphabet. A \emph{fair splitting} of $N$ is a factorization $$N=N_1N_2\cdots N_k$$ and a partition of the resulting factors $N_i$ into two disjoint families, $\mathcal{F}_1$ and $\mathcal{F}_2$, such that the total number of occurrences of each letter in the first family is the same as in the second family. Clearly, a necessary and sufficient condition for a fair splitting is that $N$ is an even word (each letter occurs an even number of times). The theorem below states that the number of cuts needed to obtain a fair splitting is at most the number of different letters occurring in the necklace.

\begin{theorem}[Goldberg and West, \cite{Goldberg-West}]\label{Theorem Necklace Splitting}
Every even word with $k$ distinct letters has a fair splitting into at most $k+1$ factors.
\end{theorem}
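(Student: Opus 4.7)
The plan is to follow the Borsuk--Ulam-based proof due to Alon. First, I would pass to a continuous version of the problem by interpreting the necklace $N=w_1\cdots w_n$ as the interval $[0,n]$, with each bead $w_j$ corresponding to the unit subinterval $[j-1,j]$. Each letter $i\in\{1,\ldots,k\}$ then induces a measure $\mu_i$ equal to Lebesgue measure restricted to the positions labeled $i$. In this formulation, a fair splitting means cutting $[0,n]$ into at most $k+1$ pieces, each assigned to one of two families, so that every $\mu_i$ is divided equally.

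Next, I would parametrize candidate splittings by the sphere $S^k=\{y\in\mathbb{R}^{k+1}:\sum_l y_l^2=1\}$. To a point $y=(y_1,\ldots,y_{k+1})$ I would associate cuts at the positions $c_j(y)=n(y_1^2+\cdots+y_j^2)$ and assign the $j$-th piece $[c_{j-1}(y),c_j(y)]$ to the family indexed by $\operatorname{sign}(y_j)\in\{+1,-1\}$ (with $c_0=0$ and $c_{k+1}=n$). Define $f\colon S^k\to\mathbb{R}^k$ by
$$f_i(y)=\sum_{j=1}^{k+1}\operatorname{sign}(y_j)\,\mu_i\bigl([c_{j-1}(y),c_j(y)]\bigr).$$
The map $f$ is continuous and satisfies $f(-y)=-f(y)$, because replacing $y$ by $-y$ leaves every $y_l^2$ invariant (so all cut positions are unchanged) while flipping every sign. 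The Borsuk--Ulam theorem therefore yields some $y^{*}\in S^{k}$ with $f(y^{*})=0$, which is precisely a continuous fair splitting of $[0,n]$ using at most $k$ cuts.

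The last step is to lift this continuous splitting to one with integer cut points, and I expect this discretization to be the main obstacle. The key observation is that if a cut $c_j(y^{*})$ falls in the interior of a bead of letter $i$, then only $\mu_i$ is locally sensitive to perturbations of $c_j$. Cuts separating two pieces of the same sign are redundant and may be deleted without disturbing any balance, while cuts separating pieces of opposite signs can be shifted to bead boundaries provided the change in the $\mu_i$-balance is compensated elsewhere. Combining these moves and using that each $\mu_i$ has even integer total mass (so any fractional $\mu_i$-contributions from interior cuts must cancel in pairs), one can adjust $y^{*}$ until every surviving cut lies at an integer position. This yields the desired factorization $N=N_1N_2\cdots N_m$ with $m\le k+1$.
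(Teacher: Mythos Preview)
The paper does not actually prove Theorem~\ref{Theorem Necklace Splitting}; it merely states the result, attributes it to Goldberg and West, and remarks that ``an elegant proof of this result, using the celebrated Borsuk--Ulam theorem, was found by Alon and West.'' Your proposal is precisely a sketch of that Alon--West argument, so in substance you are aligned with what the paper invokes.

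One comment on rigor: your continuous part is clean (you should perhaps note that $f$ remains continuous at coordinates $y_j=0$ because the corresponding piece has zero length, so the ambiguity in $\operatorname{sign}(y_j)$ is harmless), but the discretization step is only loosely sketched. The standard way to finish is: for each letter $i$, consider all cuts lying in the interior of $i$-beads and separating pieces of opposite signs; since every other contribution to the $\mu_i$-balance is an integer, these interior cuts can be moved simultaneously---some left, some right---so as to keep $f_i=0$ until one of them reaches a bead boundary, strictly decreasing the number of non-integer cuts. Iterating this for each letter yields a genuine combinatorial fair splitting with at most $k$ cuts. Your paragraph gestures at this, but as written (``fractional $\mu_i$-contributions \ldots\ must cancel in pairs'') it does not quite name the simultaneous-shift mechanism that makes the reduction go through.
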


An elegant proof of this result, using the celebrated Borsuk-Ulam theorem, was found by Alon and West \cite{Alon-West}. A more general version concerning fair splittings of necklaces into arbitrary number of parts was obtained by Alon \cite{Alon}.

We will make use of the following standard notation. For any letter $\mathtt{b}$ of the alphabet, let $\mathtt{b}^n$ denote the word made of a run of $n$ copies of $\mathtt{b}$, i.e., $$\mathtt{b}^n=\underbrace{\mathtt{bb\cdots b}}_n.$$ In general, for a word $W$, we write $W^n$ for a concatenation of $n$ copies of $W$, i.e., $$W^n=\underbrace{WW\cdots W}_n.$$ Recall that a~permutation $\gamma$ is \emph{cyclic} if it has the form $$\gamma=i(i+1)\cdots n12\cdots (i-1)$$ for some $i\in [n].$
\begin{theorem}\label{Theorem Cyclic Shuffle Squares}
	For every even binary word $W$ there exists a cyclic permutation $\gamma$ such that $W$ is a shuffle $\gamma$-square.
\end{theorem}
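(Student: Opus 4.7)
The plan is to apply the Necklace Splitting Theorem (Theorem~\ref{Theorem Necklace Splitting}) directly to $W$ with $k=2$. This yields a factorization $W = N_1 N_2 N_3$ into at most three (possibly empty) factors together with a partition of the factors into two families $\mathcal{F}_1, \mathcal{F}_2$ in which the number of $0$s and the number of $1$s are the same in each family. The only genuinely new configuration is $\mathcal{F}_1 = \{N_1, N_3\}$, $\mathcal{F}_2 = \{N_2\}$ (the two-factor cases reduce to $W$ being already a concatenation of two halves with equal letter counts). Taking the subwords $U = N_1 N_3$ (read from the positions of $\mathcal{F}_1$ in their natural order) and $V = N_2$ (read from the positions of $\mathcal{F}_2$), both have length $n = |W|/2$ and the same numbers of $0$s and $1$s.

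Next, I would exhibit the cyclic permutation $\gamma \in S_n$ witnessing the $\gamma$-similarity of $U$ and $V$. The natural candidate is the cyclic shift by $|N_1|$ positions, i.e.\ $\gamma = (|N_1|+1, |N_1|+2, \ldots, n, 1, 2, \ldots, |N_1|)$, which sends $U = N_1 N_3$ to $\gamma(U) = N_3 N_1$. Viewing $W$ as a closed necklace, $V = N_2$ and $N_3 N_1$ are exactly the two complementary arcs of length $n$ cut out by the two cut points of the fair splitting, so the desired identity $V = \gamma(U)$ is equivalent to the assertion that these two cyclic arcs coincide \emph{as linear strings}, not merely as multisets of letters.

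The main obstacle is precisely this arc equality. Binary words of equal length with the same letter counts need not be cyclic rotations of one another (for example $0011$ and $0101$), so the bare conclusion of Theorem~\ref{Theorem Necklace Splitting} is not enough. I would finish by exploiting the flexibility in the choice of cut points: among the fair splittings produced by the Alon--West/Borsuk--Ulam proof of Theorem~\ref{Theorem Necklace Splitting}, a discrete intermediate-value/sliding argument on the prefix sums of the letter $0$ along the cycle should identify a pair of cuts for which the two arcs actually coincide as strings. This is the step where the binarity of the alphabet is genuinely used, and writing it out cleanly --- for instance by sliding the two cuts simultaneously and tracking when the two arcs first become equal --- is where I expect to spend most of the effort. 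Once the arc equality is secured, $V = N_3 N_1 = \gamma(U)$ and $W$ is a shuffle $\gamma$-square, as required.
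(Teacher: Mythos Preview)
Your plan has a genuine gap at exactly the place you flagged, and the hoped-for ``sliding argument'' cannot be made to work. Concretely, take $W=\mathtt{00010111}$. The \emph{only} fair splitting into at most three factors with the middle factor forming one family is
\[
N_1=\mathtt{00},\qquad N_2=\mathtt{0101},\qquad N_3=\mathtt{11},
\]
so $U=N_1N_3=\mathtt{0011}$ and $V=N_2=\mathtt{0101}$. These two words are \emph{not} cyclic shifts of one another (the cyclic class of $\mathtt{0011}$ is $\{\mathtt{0011},\mathtt{0110},\mathtt{1100},\mathtt{1001}\}$), and there is no other fair splitting of $W$ to slide to. Hence, taking the subwords to be the factor families $N_1N_3$ and $N_2$ themselves is a dead end: for this $W$ no choice of cuts yields cyclically similar arcs.

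The paper avoids this obstruction by choosing different subwords. Starting from the same fair splitting $W=XVY$, it sets
\[
A=X_{\mathtt 0}\,V_{\mathtt 1}\,Y_{\mathtt 0}\quad\text{and}\quad B=X_{\mathtt 1}\,V_{\mathtt 0}\,Y_{\mathtt 1},
\]
i.e.\ it takes the $\mathtt 0$'s from the outer family and the $\mathtt 1$'s from the inner one (and vice versa). These are still disjoint subwords whose positions appear in increasing order, and now each of $A,B$ has the shape $\mathtt 0^{a}\mathtt 1^{b}\mathtt 0^{c}$ (resp.\ $\mathtt 1^{a'}\mathtt 0^{b'}\mathtt 1^{c'}$), hence both are cyclic rotations of $\mathtt 0^{r}\mathtt 1^{s}$. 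For $W=\mathtt{00010111}$ this gives $A=B=\mathtt{0011}$, so $W$ is even an ordinary shuffle square. The missing idea in your proposal is precisely this cross-assignment of letters between the two families; once you make that switch, no arc-equality argument is needed.
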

\begin{proof}
Let $W$ be any even binary word. Assume that the number of $\mathtt{0}$'s in $W$ is $2r$ and the number of $\mathtt{1}$'s in $W$ is $2s$. Then, by Theorem \ref{Theorem Necklace Splitting}, $W$ can be written as $W=XVY$, where the word $XY$ has $r$ $\mathtt{0}$'s and $s$ $\mathtt{1}$'s, and the same is true for the word $V$. Let us denote by $X_{\mathtt{0}}$ and $X_{\mathtt{1}}$ the subwords of $X$ consisting of all $\mathtt{0}$'s and all $\mathtt{1}$'s, respectively. Let $Y_{\mathtt{0}}$, $Y_{\mathtt{1}}$ and $V_{\mathtt{0}}$, $V_{\mathtt{1}}$ denote similarly defined subwords of $Y$ and $V$, respectively.

Consider now two subwords of $W$, namely, $A=X_{\mathtt{0}}V_{\mathtt{1}}Y_{\mathtt{0}}$ and $B=X_{\mathtt{1}}V_{\mathtt{0}}Y_{\mathtt{1}}$. It is clear that $$\gamma_1(A)=Y_{\mathtt{0}}X_{\mathtt{0}}V_{\mathtt{1}}=\mathtt{0}^r\mathtt{1}^s\ \ \ \mbox{ and }\ \ \ \gamma_2(B)=V_{\mathtt{0}}Y_{\mathtt{1}}X_{\mathtt{1}}=\mathtt{0}^r\mathtt{1}^s,$$ for some cyclic permutations $\gamma_1$ and $\gamma_2$. It follows that $\gamma_1(A)=\gamma_2(B)$, and consequently $A=\gamma(B)$, for some cyclic permutation~$\gamma$. This completes the proof.
\end{proof}

The above quite surprising theorem leads us to consider the stronger statement: namely, is it true that every even binary word is cyclically similar to a pure shuffle square? For instance, let us take the word $W=\mathtt{10010110}$. It is not a shuffle square itself, but moving its first two letters to the end gives the word $U=\mathtt{\color{red}01\color{blue}01\color{red}10\color{blue}10}$, which is a shuffle square. Is it possible that every even binary word $W$ has a factorization $W=XY$ such that $U=YX$ is a shuffle square? As it turns out, such a statement is false, and the smallest counterexample is the word
$$\mathtt{000001001111000011101111}.$$

The above word is the only counterexample of length $24$ up to cyclic shifts, reverses, and permutations of the alphabet. We call such word a {\it shuffle anti-sqaure}. More shuffle anti-squares are presented in Appendix \ref{apA}.

However, we can distinguish a certain special type of words $W$ for which the considered statement is true.

\begin{proposition}\label{Proposition Four 1's Cyclic}
	For every even binary word $W$ with at most four $\mathtt{1}$'s, there exists a cyclic permutation $\gamma$ such that the word $\gamma(W)$ is a shuffle square.
\end{proposition}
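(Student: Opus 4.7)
The plan is to case-split on $s$, defined so that $W$ has exactly $2s$ $\mathtt{1}$'s; by hypothesis $s \in \{0,1,2\}$. If $s = 0$, the word $W = \mathtt{0}^{2r}$ is itself a shuffle square of $\mathtt{0}^r$, so the identity permutation suffices. If $s = 1$, I cyclically shift $W$ so that one $\mathtt{1}$ sits at position $1$ and the next $\mathtt{1}$ appears after $d \le r$ zeros, giving the form $\mathtt{1}\mathtt{0}^d\mathtt{1}\mathtt{0}^{2r-d}$; this splits into two copies of $\mathtt{1}\mathtt{0}^r$ by assigning the leading $\mathtt{1}$, the $d$ intervening zeros and $r-d$ of the trailing zeros to the first copy, and the remaining $\mathtt{1}$ with the remaining $r$ trailing zeros to the second.

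The interesting case is $s = 2$: $W$ has four $\mathtt{1}$'s separated by cyclic gaps $g_1,g_2,g_3,g_4 \ge 0$ summing to $2r$. For each of the four cyclic shifts placing the $i$-th $\mathtt{1}$ at position $1$, I consider two pairings of the four $\mathtt{1}$'s into the two copies of the shuffle square pattern $U = \mathtt{1}\mathtt{0}^{x_2}\mathtt{1}\mathtt{0}^{x_3}$ with $x_2 + x_3 = r$: the \emph{adjacent} pairing $\{1,2\} \cup \{3,4\}$ and the \emph{nested} pairing $\{1,4\} \cup \{2,3\}$. Direct bookkeeping of how the zero blocks must split between the two copies shows that, at shift $i$, the adjacent pairing is feasible precisely when $g_{i+1} \le g_{i+3}$ and $g_i + g_{i+1} \ge r$ (and then $x_2 = g_{i+1}$ is automatically an integer), while the nested pairing is feasible when $g_{i+1} \le g_{i+2}$, $g_{i+1} + g_{i+3} \le r$, and the induced closed interval for $x_2$ contains an integer. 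A short pigeonhole argument using the identities $(g_1+g_3)+(g_2+g_4) = (g_1+g_2)+(g_3+g_4) = 2r$ then shows that, for some shift $i$, at least one of these eight feasibility conditions is met.

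The main obstacle is the $s = 2$ case. In particular, one must handle the exceptional parity configuration in which the nested-pairing interval for $x_2$ degenerates to a single half-integer; this occurs precisely when $g_{i+3} = 0$ and $g_{i+1} + g_{i+2}$ is odd. In such configurations one verifies that the adjacent condition is satisfied at a suitably chosen other shift by comparing opposite gaps, so that the enumeration covers every admissible tuple $(g_1, g_2, g_3, g_4)$.
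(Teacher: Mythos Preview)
Your strategy is genuinely different from the paper's. The paper cyclically shifts so that the word begins with a maximal run of $\mathtt{1}$'s and then does an explicit case analysis on the length of that run; in the hardest case (four separated $\mathtt{1}$'s) it uses a neat trick, replacing the shortest block $\mathtt{10}^a$ by a new letter $\mathtt{I}$ to reduce to the already-settled situations with at most two $\mathtt{1}$'s. Your approach instead parameterises by the cyclic gap vector $(g_1,g_2,g_3,g_4)$ and tries to certify a shuffle-square decomposition directly via feasibility inequalities for the adjacent and nested pairings. That is a reasonable and more ``structural'' route, and the feasibility conditions you write down for the two pairings are correct (with the convention that $g_{i+1}$ is the first gap after the starting $\mathtt{1}$).

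However, as written the proposal is a sketch rather than a proof: the two steps that carry all the weight are merely asserted. First, the ``short pigeonhole argument'' that at every gap vector at least one of the eight adjacent/nested conditions is satisfied is not given. It is true --- after normalising so that $a+c\le r$, one checks that either $N_1$ or $N_3$ holds, and if both fail one derives $a>b$, $c>d$, and then a contradiction from the adjacent failures --- but this is an actual case analysis you need to write out, not a one-line pigeonhole. Second, your treatment of the parity obstruction is too vague: ``one verifies that the adjacent condition is satisfied at a suitably chosen other shift'' is exactly the thing that needs verifying. Concretely, the dangerous configuration is $g_{i+3}=0$ with $g_{i+1}+g_{i+2}$ odd; you must show that in this situation either another nested condition (at a shift with third gap $\ne 0$) succeeds, or, when both relevant nested pairings are parity-blocked (forcing two opposite gaps to vanish), one of the adjacent conditions necessarily holds because the remaining two gaps sum to $2r$ and hence one of them is $\ge r$. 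Until these two arguments are actually carried out, the $s=2$ case is incomplete. You should also note that you have silently discarded both the crossing pairing $\{1,3\}\cup\{2,4\}$ and all cyclic shifts not starting with a $\mathtt{1}$; this turns out to be harmless, but it deserves a sentence of justification.
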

\begin{proof}
In our argumentation we will often use the following simple observations. Suppose that $W=W_1W_2\cdots W_n$ and each of the words $W_i$ is a shuffle square. Then $W$ is a shuffle square, too. Also, if $W$ is a shuffle square, then so is its reverse.

Let $W$ be a binary word with two $\mathtt{1}$'s and even number of $\mathtt{0}$'s. We may assume that it has the form of $W=\mathtt{1}\mathtt{0}^a\mathtt{1}\mathtt{0}^b$ for some integers $a$ and $b$. If $a\leqslant b$, then $W=(\mathtt{10}^a\mathtt{10}^a)(\mathtt{0}^{b-a})$, where $b-a$ is even. So, $W$ is a concatenation of two squares, hence a shuffle square itself. If $a>b$, then the cyclic shift $W'$ of $W$ is a shuffle square, since it can be written as $W'=(\mathtt{0}^{a-b})(\mathtt{0}^b\mathtt{10}^b\mathtt{1})$.

Now let $W$ be an even binary word with four $\mathtt{1}$'s. Clearly, we may assume that $W$ starts with a block of $\mathtt{1}$'s. Then the proof splits into four cases, depending on the length of said block.
\begin{case}[$W=\mathtt{11110}^{2r}$]
\emph{Clearly $W$ is a concatenation of two squares, so a shuffle square itself.}
\end{case}
\begin{case}[$W=\mathtt{1110}^a\mathtt{10}^b$]
	\emph{If $a\leqslant b$, then we can write $$W=(\mathtt{11})(\mathtt{10}^a\mathtt{10}^a)(\mathtt{0}^{b-a}),$$ where $b-a$ is even. Thus $W$ is a concatenation of three squares. If $a>b$, then the cyclic shift of $W$ factorizes into three squares, namely $$W'=(\mathtt{11})(0^{a-b})(0^b10^{b}1).$$ So $W'$ is a shuffle square.}
\end{case}
\begin{case}[$W=\mathtt{110}^a\mathtt{10}^b\mathtt{10}^c$]
	\emph{Consider a cyclic shift $W'$ of $W$, i.e. $$W'=\mathtt{10}^a\mathtt{10}^b\mathtt{10}^c\mathtt{1}.$$ Denote $r=\frac{a+b+c}{2}$. If $a\leqslant r \leqslant a+b$, then $W'$ can be split into $$W'=\color{red}{\mathtt{10}}^a\color{blue}{\mathtt{1}}\color{red}{0^{r-a}}\color{blue}{\mathtt{0}^{a+b-r}}\color{red}{\mathtt{1}}\color{blue}{\mathtt{0}^c\mathtt{1}}.$$ Clearly, both subwords are identical and equal to $\mathtt{10}^r\mathtt{1}$. Suppose now that $r<a$ and consider the original word $W$. We may assume that $b>0$ since otherwise we are back to the case with two $\mathtt{1}$'s. We claim that we may always factorize $W$ into at most three shuffle squares, as $W=S_1S_2S_3$. Indeed, if $c$ is even, then we may take $$S_1=\mathtt{11}\mathtt{0}^{a-b},\,S_2=\mathtt{0}^b\mathtt{1}\mathtt{0}^b\mathtt{1}\ \mbox{ and }\ S_3=\mathtt{0}^c.$$ If $c$ is odd, then we may take $$S_1=\mathtt{11}\mathtt{0}^{a-b+1},\,S_2=(\mathtt{0}^{b-1}\mathtt{1}\mathtt{0})(\mathtt{0}^{b-1}\mathtt{1}\mathtt{0})\ \mbox{ and }\ S_3=\mathtt{0}^{c-1}.$$}
\end{case}
\begin{case}[$W=\mathtt{10}^a\mathtt{10}^b\mathtt{10}^c\mathtt{10}^d$]
	\emph{We may assume that all numbers, $a,b,c,d$, are positive, since otherwise we are back to one of the previous cases. Also, let $a$ be the least among all of the four. Then we may treat the word $\mathtt{I}=\mathtt{10}^a$ as a new letter, and consider the word $$U=\mathtt{I}\mathtt{I0}^{b-a}\mathtt{I0}^{c-a}\mathtt{I0}^{d-a}$$ as a word over a binary alphabet $\{\mathtt{I},\mathtt{0}\}$. Since $U$ has even number of $\mathtt{0}$'s, we are done by the previously considered cases.}
\end{case}
The proof of the proposition is thus completed.
\end{proof}

The above results naturally lead us to the investigation of the possible number of cyclic permutations that result in a shuffle square for a given binary word. Let $W$ be a word of length $l$. Let $\overline{W}$ be a multiset containing $l$ elements, which are all circular shifts of $W$ (i.e., words $\gamma(W)$ for cyclic permutations $\gamma$). For example
$$\overline{\mathtt{0011}}=\{\mathtt{0011},\mathtt{0110},\mathtt{1100},\mathtt{1001}\} \ \ \ \ \ \mbox{ and }\ \ \ \ \ \overline{\mathtt{0101}}=\{\mathtt{0101},\mathtt{1010},\mathtt{0101},\mathtt{1010}\}.$$

For every binary even word $W$ let $s( W)$ denote a number of elements of $\overline{W}$, which are shuffle squares. We have
$$s(0011)=2\ \mbox{ and }\ s(0101)=4.$$
Of course $s(W)=l$ if $W$ is a square.

Let $T_{2n}$ denote a set of all binary even words of length $2n$. Let $s_{2n}$ denote the minimal value of $S$ for binary even words of length $2n$:
$$s_{2n}=\min_{ W\in T_{2n}}s( W).$$

Let $\overline{S_{2n}}$ denote the set of even binary words $W$ of length $2n$ such that $s(W)=s_{2n}$. Let us notice that if $W\in\overline{S_{2n}}$, than $W'\in\overline{S_{2n}}$ if $V=\gamma{W}$ and $\gamma$ is a cyclic permutation of~$W$, a~reverse of $W$ or a permutation of the alphabet. Thus let $S_{2n}$ be a set of even binary words $W$ of length $2n$ such that $s(W)=s_{2n}$ up to the cyclic permutations, reverses and permutation of the alphabet. The values and some properties of $s_{2n}$ for small $n$ are given in Table \ref{T1}. Values of $s_{2n}$ for $n\geqslant12$ are pointing out the existence of suffle anti-squares of those lengths --- listed in Appendix \ref{apA}. 

	\begin{table}\centering
	\begin{tabular}{|c||c|c|c|c|c|c|c|c|c|c|c|c|c|c|c|}\hline
$2n$&$2$&$4$&$6$&$8$&$10$&$12$&$14$&$16$&$18$&$20$&$22$&$24$&$26$&$28$\\\hline
$s_{2n}$&$2$&$2$&$3$&$3$&$3$&$3$&$3$&$2$&$2$&$2$&$1$&$0$&$0$&$0$\\\hline
$|S_{2n}|$&$1$&$1$&$1$&$1$&$1$&$1$&$2$&$1$&$3$&$13$&$12$&$1$&$26$&$103$\\\hline
	    \end{tabular}
    \vskip0.5cm
	   	\caption{The values of $s_{2n}$ and sizes of the sets $S_{2n}$ for $n\leqslant14$.}\label{T1}
	    \end{table}
Thus we state the following supposition.
\begin{conjecture}\label{newGPP}
There exists a shuffle anti-square of length $2n$ for every $n\geqslant12$.
\end{conjecture}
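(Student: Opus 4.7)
The base cases $n\in\{12,13,14\}$ are already settled by the computations tabulated in Table~\ref{T1}, so the content of the conjecture is to handle all $n\geqslant 15$. I would pursue an inductive proof with step $n\mapsto n+1$: given a shuffle anti-square $W$ of length $2n\geqslant 24$, produce a shuffle anti-square $W'$ of length $2n+2$ by the simplest natural construction, \emph{padding inside a run}. Concretely, locate a maximal run $\mathtt{0}^k$ in $W$ with $k$ as large as possible and replace it by $\mathtt{0}^{k+2}$; the resulting word $W'$ has the same number of $\mathtt{1}$'s and two additional $\mathtt{0}$'s, hence is still even.

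To verify that $W'$ is an anti-square, I would argue by contrapositive. Suppose some cyclic shift $W''=\gamma(W')$ is a shuffle square, split into two equal subwords $A$ and $B$. The two inserted $\mathtt{0}$'s distribute between $A$ and $B$ in exactly one of two ways: one in each half (the \emph{balanced} case) or both in the same half (the \emph{unbalanced} case). In the balanced case, erasing the two inserted $\mathtt{0}$'s from $W''$ directly produces a shuffle square of a cyclic shift of $W$, contradicting the hypothesis that $W$ is an anti-square. In the unbalanced case, erasure leaves a ``near-shuffle'' decomposition of a cyclic shift of $W$ in which one half equals the other plus two extra $\mathtt{0}$'s, and the plan would be to upgrade this to a genuine shuffle square by migrating the two surplus $\mathtt{0}$'s across the split, exploiting the fact that they sit inside a run of $\mathtt{0}$'s whose nearby letters may be freely reassigned between the halves.

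The decisive obstacle is precisely the unbalanced case: there is no \emph{a priori} reason such a near-shuffle decomposition must be upgradable, and the induction could break down for individual $W$'s. Remedies worth trying are (i) positioning the inserted pair adjacent to a $\mathtt{01}$ boundary, so that any prospective shuffle square of a cyclic shift of $W'$ is forced to absorb exactly one inserted $\mathtt{0}$ in each half, and (ii) falling back on padding $\mathtt{11}$ inside the longest run of $\mathtt{1}$'s whenever padding $\mathtt{00}$ fails, and arguing that at least one of these choices must succeed for every $W$. If induction proves elusive, the fallback is an explicit parametric family patterned on the length-$24$ counterexample $\mathtt{000001001111000011101111}$, with its block lengths grown systematically and anti-squareness verified by a finite case analysis of the $2n$ cyclic shifts, pruned by counting constraints on the $\mathtt{0}$-runs landing in each half. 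The abundance of anti-squares of length $28$ (at least $103$ up to the natural symmetries, per Table~\ref{T1}) is empirical evidence that such families should exist in profusion.
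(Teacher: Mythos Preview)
The statement you are attempting to prove is not proved in the paper: it is explicitly labelled a \emph{conjecture} (Conjecture~\ref{newGPP}), supported only by the computational data in Table~\ref{T1} and Appendix~\ref{apA}. There is therefore no ``paper's own proof'' to compare against; the question is whether your proposal actually establishes the conjecture.

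It does not, and you are largely aware of this yourself. You flag the unbalanced case of the induction as a ``decisive obstacle'' with no resolution beyond heuristics, and your fallback is merely the hope that a parametric family exists. But there is a further gap you have \emph{not} flagged: your treatment of the balanced case is already incorrect. If a cyclic shift $W''$ of $W'$ splits as a shuffle square into identical halves $A=B$, and the two inserted $\mathtt{0}$'s land one in $A$ and one in $B$, deleting them yields words $A'$ and $B'$ of the right length, but there is no reason $A'=B'$. The deleted $\mathtt{0}$'s are adjacent in $W''$, yet as subword positions inside $A$ and $B$ they may sit at entirely different indices; removing the $j$-th letter of $A$ and the $j'$-th letter of $B$ with $j\neq j'$ generally destroys equality. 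So the balanced case is no easier than the unbalanced one, and the whole inductive scheme collapses at step one.

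In short: your write-up is a reasonable record of avenues one might explore, but it is a research plan, not a proof. The paper treats the statement the same way --- as open.
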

Let us notice that the above conjecture is equivalent to the statement that $s_{2n}=0$ holds for every $n\geqslant12$.

\subsection{Dihedral shuffle squares}
It is natural to wonder to what extent the analogue of Theorem \ref{Theorem Cyclic Shuffle Squares} is true for larger alphabets. For ternary words the statement is no longer true, as shows the example of the word $W=\mathtt{012210}$. Indeed, there are only two pairs of similar words into which $W$ can be split, namely $(\mathtt{012},\mathtt{210})$ and $(\mathtt{021,\mathtt{120}})$, which are however not $\gamma$-similar for any cyclic $\gamma$. Nevertheless, it seems plausible that not much bigger class of permutations should be sufficient for ternary words.

Let $S_n$ denote the symmetry group consisting of all permutations of $[n]$. Recall that a \emph{dihedral group $D_n$} is a subgroup of $S_n$ consisting of all permutations corresponding to symmetries of a regular polygon with $n$ vertices. In particular, $|D_n|=2n$. Notice also that $D_3=S_3$. 

A \emph{dihedral shuffle square} of length $2n$ is any shuffle $\gamma$-square, where $\gamma\in D_n$. Based on intuition and some numerical experiments we propose the following conjecture.

\begin{conjecture}\label{Conjecture Ternary Dihedral}
	Every even ternary word is a shuffle $\gamma$-square for a dihedral permutation $\gamma$.
\end{conjecture}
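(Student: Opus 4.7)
My plan would be to mimic the proof of Theorem~\ref{Theorem Cyclic Shuffle Squares} but exploit the extra elements of $D_n$ beyond cyclic rotations, namely the reflections. The starting point is Theorem~\ref{Theorem Necklace Splitting} applied with $k=3$ to an even ternary word $W$ containing $2r$ copies of $\mathtt{0}$, $2s$ copies of $\mathtt{1}$, and $2t$ copies of $\mathtt{2}$. This yields a factorisation $W=N_1N_2N_3N_4$ together with a partition of the four factors into two families $\mathcal F_1,\mathcal F_2$ each of which contains $r$ zeros, $s$ ones and $t$ twos. Up to relabelling only two partition patterns arise: the \emph{adjacent} pattern $\mathcal F_1=\{N_1,N_2\}$ and the \emph{alternating} pattern $\mathcal F_1=\{N_1,N_3\}$.

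In the adjacent pattern, $W=(N_1N_2)(N_3N_4)$ is already cut into halves with matching letter counts, and one can run the binary argument of Theorem~\ref{Theorem Cyclic Shuffle Squares} applied in turn to well-chosen projections of $W$ onto two-letter sub-alphabets, then re-interleaved, to realise $W$ as a cyclic, hence dihedral, shuffle square. The genuinely new case is the alternating pattern. For each letter $\ell\in\{\mathtt{0},\mathtt{1},\mathtt{2}\}$ and each factor $N_i$, I would decide whether the $\ell$-positions inside $N_i$ go to a subword $A$ or to its complement $B$. Following the binary template, a natural first attempt sends the $\mathtt{0}$'s of $N_1,N_3$ to $A$ and those of $N_2,N_4$ to $B$; the $\mathtt{1}$'s of $N_1,N_3$ to $B$ and those of $N_2,N_4$ to $A$; while the $\mathtt{2}$'s are assigned by one of two symmetric rules. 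With either rule, $A$ and $B$ each contain $r$ zeros, $s$ ones and $t$ twos, and the question is whether they are dihedral images of the canonical block-sorted word $\mathtt{0}^r\mathtt{1}^s\mathtt{2}^t$: if so, then $A=\gamma(B)$ for the composition $\gamma\in D_{r+s+t}$ of the two dihedral transformations.

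The main obstacle is precisely this last step. A dihedral image of $\mathtt{0}^r\mathtt{1}^s\mathtt{2}^t$ consists of at most three monochromatic blocks arranged in one of the two circular orders $\mathtt{0}\mathtt{1}\mathtt{2}$ or $\mathtt{0}\mathtt{2}\mathtt{1}$, possibly with a wrap-around splitting one block in two. However, the subwords $A,B$ produced by the scheme above typically exhibit up to four alternations between letter types, coming from the four factors, and these can only be reduced to three by exploiting a coincidence between the internal letter structures of the $N_i$'s. I expect that in many cases one of the two assignment rules for the $\mathtt{2}$'s, combined with a cyclic rotation and possibly a reversal of one subword, already suffices; but some configurations will require a refinement of Theorem~\ref{Theorem Necklace Splitting} imposing additional constraints on where the cuts fall, for instance forcing each $N_i$ to be monochromatic in at least one letter. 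Establishing such a refinement, or devising an ad hoc case analysis covering the residual configurations, is the crux of the problem and the reason the conjecture remains open. As a sanity check, the toy example $W=\mathtt{012210}$ realises the alternating pattern with $N_1=\mathtt{0}$, $N_2=\mathtt{12}$, $N_3=\mathtt{21}$, $N_4=\mathtt{0}$; the required dihedral element is the reverse, giving $A=\mathtt{012}$ and $B=\mathtt{210}$, precisely as predicted.
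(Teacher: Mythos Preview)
The statement you are addressing is a \emph{conjecture}, not a theorem: the paper does not prove it and explicitly presents it as open. There is therefore no ``paper's own proof'' to compare against. What the paper does offer is a brief heuristic remark that a natural approach would be to use Theorem~\ref{Theorem Necklace Splitting} to write $W=XYZT$ with $XZ$ similar to $YT$, and that one should perhaps first try to handle the abelian-square case, which ``already looks like a nontrivial task.''

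Your proposal follows the same heuristic line and, to your credit, you explicitly acknowledge that the argument does not close and that the conjecture remains open. Two points are worth flagging, however. First, your taxonomy of partition patterns is slightly off: in a \emph{minimal} fair splitting adjacent factors necessarily lie in different families (else they could be merged), so with four factors the only pattern that actually occurs is your ``alternating'' one; your ``adjacent'' pattern $\{N_1,N_2\}$ versus $\{N_3,N_4\}$ collapses to the two-factor abelian-square case $W=UV$. Second, and more importantly, you dispose of that abelian-square case in a single sentence by appealing to ``the binary argument of Theorem~\ref{Theorem Cyclic Shuffle Squares} applied in turn to well-chosen projections onto two-letter sub-alphabets, then re-interleaved.'' This step is not justified and is very likely where the real difficulty lies: the decompositions obtained from different two-letter projections need not be compatible with a single splitting of $W$, and the paper itself singles out precisely this abelian-square case as the nontrivial starting point. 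So the case you label ``genuinely new'' is not the only hard one; the case you wave through is already open.
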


A natural approach would be to use Theorem \ref{Theorem Necklace Splitting}. In the ternary case it says that every word can be written as $W=XYZT$ so that $XZ$ is similar to $YT$ (with some of the factors possibly empty). The crucial structures here, as in the binary case, are so called \emph{abelian squares}, i.e., words of the form $W=UV$, where $U$ is $\gamma$-similar to $V$ for some permutation~$\gamma$. To adress Conjecture \ref{Conjecture Ternary Dihedral} one should perhaps start with proving it first for abelian squares, which already looks like a nontrivial task.

Let us also mention that an analogue of Conjecture \ref{Conjecture Main} for ternary words with dihedral instead of cyclic permutations is no longer true as well. More precisely, it is not true that every even ternary word can be turned into a shuffle square by applying a dihedral permutation to the whole word. The counter example is the same word $W=\mathtt{001221}$.

\subsection{Large alphabets and canonical words}Perhaps the most intriguing problem is to find out what happens with generalized shuffle squares for larger alphabets. Let $\mathbb{A}$ be a fixed alphabet of size $k$. Let $E_n$ denote the set of all even $k$-ary words of length $n$ and let $S_n$ be the set of all permutations of $[n]$. Consider a bipartite graph $G_n$ with bipartition classes $E_n$ and $S_n$ with the edge set consisting of all pairs $(W,\gamma)$ such that $W$ is a shuffle $\gamma$-square. For a subset of vertices $X$ of $G_n$, we denote by $N(X)$ the set of all vertices that have a neighbor in $X$. We are interested in subsets $X\subseteq S_n$ such that $N(X)=E_n$. We call them \emph{covering} sets of permutations.

The main problem is to determine the minimum size of a covering set for given $k$ and $n$. Let us denote this quantity by $m_k(n)$. Theorem \ref{Theorem Cyclic Shuffle Squares} states that for every $n$, the set of all cyclic permutations of $[n]$ is a covering set. Hence, we have $m_2(n)\leqslant n$ for every natural $n$. Actually, this bound could be lowered by the obvious observation that in any minimal covering set there are no pairs of mutually inverse (distinct) permutations (since two words are $\gamma$-similar if and only if they are $\gamma^{-1}$-similar).  Thus, we have in fact $m_2(n)\leqslant \frac{1}{2}n+1$ for all $n\geqslant 1$. This leads to our main conjecture.

\begin{conjecture}\label{Conjecture Main}
	For every $k\in \mathbb{N}$ there exists a constant $c_k$ such that $m_k(n)\leqslant c_kn$ for all $n\geqslant 1$.
\end{conjecture}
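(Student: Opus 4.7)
The plan is to generalize the proof of Theorem~\ref{Theorem Cyclic Shuffle Squares} by combining Theorem~\ref{Theorem Necklace Splitting} with induction on the alphabet size $k$. Given an even $k$-ary word $W$ of length $n$, the first step is to apply Goldberg--West, factoring $W = F_1 F_2 \cdots F_{k+1}$ and partitioning the factors into two families $\mathcal{F}_1, \mathcal{F}_2$ with identical letter counts. For any nonconstant labeling $\pi\colon \mathbb{A}\to\{1,2\}$, define $A_\pi$ as the subword of $W$ consisting, for each letter $a$, of the occurrences of $a$ lying inside factors of $\mathcal{F}_{\pi(a)}$, and let $B_\pi$ be the complementary subword. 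By construction $A_\pi$ and $B_\pi$ have the same letter multiset, so the task reduces to controlling the permutation $\gamma_\pi$ that realizes $A_\pi = \gamma_\pi(B_\pi)$.

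The second step is to exploit the structure of $A_\pi$ and $B_\pi$. Each splits into at most $k+1$ contiguous segments, one per $F_i$; the $i$-th segment of $A_\pi$ is a subword of $F_i$ over the sub-alphabet $\pi^{-1}(f_i) \subsetneq \mathbb{A}$, where $f_i \in \{1,2\}$ denotes the family containing $F_i$, and the $i$-th segment of $B_\pi$ lives over the complementary sub-alphabet. In the binary case, each sub-alphabet has a single letter, forcing the segments to be monochromatic and consequently placing $\gamma_\pi$ in the cyclic group---this is what gives the linear bound in Theorem~\ref{Theorem Cyclic Shuffle Squares}. For $k\geqslant 3$, the sub-alphabets are strictly smaller than $\mathbb{A}$, so each segment is an even word over a smaller alphabet. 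This opens the door to induction: align the segments by applying the inductive covering set on each sub-alphabet, then assemble the local permutations into a global one, and aim for a recursion $c_k \leqslant (k+1)\cdot c_{k-1} + O(1)$.

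The main obstacle is the assembly step. Independent inductive choices on the $O_k(1)$ segments would yield a global permutation drawn from a class of size $(c_{k-1} n)^{O(k)}$, far exceeding the target $c_k n$. Overcoming this requires a careful canonicalization: the local alignments should be determined by only $O_k(1)$ global parameters together with the block lengths $|F_i|$, which collectively take at most $O_k(n^{O_k(1)})$ values and must then be compressed to $O_k(n)$ by exploiting the freedom in both the Necklace Splitting and the labeling $\pi$. A possible simplification is to use Alon's multi-thief generalization of Theorem~\ref{Theorem Necklace Splitting} to pre-sort $W$ into a finer factorization where each segment of $A_\pi$ and $B_\pi$ is already monochromatic, reducing the problem to a pure block-permutation question with $O_k(1)$ blocks of variable lengths. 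Even after this reduction, shaving the remaining polynomial factor in $n$ down to linear appears to be the heart of the conjecture and will likely demand a genuinely new combinatorial idea; a reasonable intermediate milestone, already of independent interest, would be to first establish the weaker polynomial bound $m_k(n) = O_k(n^{O_k(1)})$ and then refine.
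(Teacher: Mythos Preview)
The statement is Conjecture~\ref{Conjecture Main}, which the paper explicitly leaves open; there is no proof in the paper to compare your proposal against. What the paper offers is the base case $k=2$ (Theorem~\ref{Theorem Cyclic Shuffle Squares}) and the remark that Conjecture~\ref{Conjecture Ternary Dihedral}, if true, would give $c_3=2$. Your write-up is not a proof either, and to your credit you say so in the last paragraph: you outline a strategy, identify the bottleneck, and propose a polynomial bound as an intermediate target. So the honest summary is that neither you nor the paper proves the conjecture.

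That said, there is a concrete error in your inductive step that should be flagged. You write that for $k\geqslant 3$ ``each segment is an even word over a smaller alphabet''. This is false. The fair splitting of Theorem~\ref{Theorem Necklace Splitting} only guarantees that the \emph{total} letter counts over $\mathcal{F}_1$ and $\mathcal{F}_2$ agree; it says nothing about parity within a single factor $F_i$. More importantly, the $i$-th segment of $A_\pi$ lives over the alphabet $\pi^{-1}(f_i)$ while the $i$-th segment of $B_\pi$ lives over the \emph{complementary} alphabet $\pi^{-1}(3-f_i)$, so there is no natural pair of words over a common smaller alphabet on which to invoke the inductive hypothesis $m_{k-1}(n)\leqslant c_{k-1}n$. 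The recursion $c_k\leqslant (k+1)c_{k-1}+O(1)$ therefore has no clear meaning as stated.

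Your fallback suggestion---refine the factorization (via iterated or multi-thief necklace splitting) until every segment of both $A_\pi$ and $B_\pi$ is monochromatic, reducing to a block-rearrangement problem with $O_k(1)$ blocks---is the sounder route and does plausibly yield $m_k(n)=n^{O_k(1)}$, since the resulting permutation is determined by a bounded number of block lengths. But as you yourself note, compressing this polynomial family down to $O_k(n)$ permutations is exactly the content of the conjecture, and nothing in your outline addresses it. So the proposal should be read as a reasonable research plan toward a weaker polynomial bound, not as a proof of Conjecture~\ref{Conjecture Main}.
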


Notice that Conjecture \ref{Conjecture Ternary Dihedral} implies the above statement for $k=3$ with constant $c_3=2$. By the same argument with inverse permutations this bound could be lowered to $m_3(n)\leqslant \frac{3}{2}n+1$ provided that Conjecture \ref{Conjecture Ternary Dihedral} is true.

The problem of precise determination of the number $m_k(n)$ seems challenging even in the simplest case of $n=k$. By computer experiments we have determined exact values of $m_k(k)$ for a few initial $k$.  

\begin{proposition}
	We have $m_2(2)=2$, $m_3(3)=5$, $m_4(4)=14$.
\end{proposition}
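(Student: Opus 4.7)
This proposition is at heart a computational verification, so the plan is to cast it as a minimum set-cover instance and solve it directly in each of the three cases; only $k=2$ admits a genuinely short hand-argument.

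First I would set up the problem: the ``elements'' are the even $k$-ary words of length $2k$, which I denote $E_{2k}$, and the ``sets'' are the neighborhoods $N(\gamma) = \{W \in E_{2k} : W \text{ is a shuffle } \gamma\text{-square}\}$ indexed by $\gamma \in S_k$. The first computational step is to assemble the incidence table: for a fixed word $W$ of length $2k$ and fixed $\gamma \in S_k$, deciding whether $W$ is a shuffle $\gamma$-square amounts to enumerating the $\binom{2k}{k}$ partitions of the positions of $W$ into two subwords $A,B$ of length $k$ each and checking $A_i = B_{\gamma_i}$ for $i = 1,\dots,k$. One should also exploit the symmetry $N(\gamma) = N(\gamma^{-1})$, which is immediate from the definition of $\gamma$-similarity, to collapse $S_k$ to its $(|S_k|+f_k)/2$ inverse-classes, where $f_k$ denotes the number of involutions in $S_k$. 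The three cases then reduce to selecting a minimum subfamily from $2$, $5$, and $17$ classes respectively.

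For $k=2$ the answer follows by inspection: $|E_4|=8$ and $S_2 = \{12, 21\}$, and one checks that $\mathtt{0011}$ is a shuffle $\gamma$-square only for the identity $\gamma=12$, while $\mathtt{0110}$ is one only for the transposition $\gamma=21$; hence both permutations of $S_2$ are forced and $m_2(2)=2$. For $k=3$ one has $|E_6|=183$ and $|S_3|=6$, so exhausting the $\binom{6}{4}=15$ candidate four-element subsets of $S_3$ confirms that none covers, while the upper bound is witnessed by an explicit five-element covering (the natural candidate being the identity, the three transpositions, and one of the two $3$-cycles). For $k=4$ one has $|E_8|=8320$ (by a routine generating-function computation with $(\cosh x)^4$) and $|S_4|=24$; after the inversion reduction only $17$ inverse-classes remain, so exhausting the $\binom{17}{13}=2380$ candidate $13$-subsets of representatives is entirely feasible by direct enumeration.

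The main obstacle, if one wishes to avoid quoting ``computer experiment'', is presenting a human-readable proof of the lower bound $m_4(4)\geqslant 14$. A direct combinatorial argument at this level of specificity seems unlikely; the honest approach is to record the explicit $14$-element minimum covering returned by the search and pair it with a certificate in the form of a small ``obstruction'' family of words in $E_8$ whose neighborhoods $N(\gamma)$ jointly admit no $13$-element transversal. Both objects fall out of the search and can be verified in linear time. For $k=3$, the necessity of each of the five permutations in the covering should likewise be demonstrable by exhibiting, for each, a single witness word in $E_6$ that is covered only by that permutation (up to inversion), giving a fully by-hand proof in that case.
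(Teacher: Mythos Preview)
Your proposal is correct and follows essentially the same route as the paper: both treat the proposition as a computational set-cover verification, exploit $N(\gamma)=N(\gamma^{-1})$, and certify the lower bound for $k=3$ via witness words covered by a single permutation class (the paper exhibits five canonical words each with a unique neighbour in $S'_3$, exactly as you suggest in your last paragraph). The one methodological refinement in the paper that you omit is the further reduction modulo alphabet permutations to \emph{canonical words} (one representative per orbit, giving $|C_k|=(2k)!/(k!\,2^k)$), which shrinks the ground set from $|E_6|=183$ to $|C_3|=15$ and from $|E_8|=8320$ to $|C_4|=105$; this is purely an efficiency gain and does not change the argument.
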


In our computations we have made some simple reductions that we explain below. In order to transparently distinguish words from permutations we will use ordinary capital letters $\mathtt{A},\mathtt{B},\mathtt{C},\ldots$ for symbols of the alphabet.

Firstly, let us notice that in calculating $m_k(k)$ one may restrict to $k$-ary words in which every letter occurs exactly twice. Moreover, one may identify words differing only by a permutation of the alphabet (since for two such words $U,W$ we have $N(U)=N(W)$). So, picking the lexicographically least word as a representative of the class of identified words we get a~collection $C_k$ of \emph{canonical words}. For instance, for $k=2$ we have only three canonical words: $\mathtt{AABB}$, $\mathtt{ABAB}$, and $\mathtt{ABBA}$, while for $k=3$ there are $15$ of them (see Table \ref{Table G'_3}). It is not hard to check that in general we have $$|C_k|=\frac{(2k)!}{k!\cdot 2^k}.$$

Reducing the set of permutations from $S_k$ to $S'_k$ by picking one for each pair of mutually inverse permutations, gives the reduced bipartite graph $G'_k$ on the bipartition classes $C_k$ and $S'_k$. Clearly, every covering set of permutations in $G'_k$ is at the same time a covering set in $G_k$.

Let us see how to find all neighbors of a canonical word in the graph $G'_k$ through the~example of the word $W=\mathtt{ABABCC}$. In order to do that we need to consider all possible decompositions of $W$ into a pair of similar words $W'$ and $W''$ (blue and red), and for each such pair determine the permutation transforming $W'$ into $W''$ (see Table \ref{Table ABABCC}). Actually, as seen in Table \ref{Table ABABCC}, there are only two different pairs of words (up to the order) one may obtain from $W$, namely, $(\mathtt{ABC},\mathtt{ABC})$ and $(\mathtt{ABC},\mathtt{BAC})$, which give two corresponding permutations, $123$ and $213$.

\begin{table}

\begin{center}
	\begin{tabular}{|c|c|c||c|c|c|}\hline
		$\mathtt{\textcolor{blue}{AB}\textcolor{red}{AB}\textcolor{blue}{C}\textcolor{red}{C}}$&$\mathtt{\textcolor{blue}{ABC}}\to \mathtt{\textcolor{red}{ABC}}$&123&$\mathtt{\textcolor{red}{A}\textcolor{blue}{BA}\textcolor{red}{B}\textcolor{blue}{C}\textcolor{red}{C}}$&$\mathtt{\textcolor{blue}{BAC}}\to \mathtt{\textcolor{red}{ABC}}$&213\\\hline
		$\mathtt{\textcolor{blue}{AB}\textcolor{red}{ABC}\textcolor{blue}{C}}$&$\mathtt{\textcolor{blue}{ABC}}\to \mathtt{\textcolor{red}{ABC}}$&123&$\mathtt{\textcolor{red}{A}\textcolor{blue}{BA}\textcolor{red}{BC}\textcolor{blue}{C}}$&$\mathtt{\textcolor{blue}{BAC}}\to \mathtt{\textcolor{red}{ABC}}$&213\\\hline
		$\mathtt{\textcolor{blue}{A}\textcolor{red}{BA}\textcolor{blue}{BC}\textcolor{red}{C}}$&$\mathtt{\textcolor{blue}{ABC}}\to \mathtt{\textcolor{red}{BAC}}$&213&$\mathtt{\textcolor{red}{AB}\textcolor{blue}{ABC}\textcolor{red}{C}}$&$\mathtt{\textcolor{blue}{ABC}}\to \mathtt{\textcolor{red}{ABC}}$&123\\\hline
		$\mathtt{\textcolor{blue}{A}\textcolor{red}{BA}\textcolor{blue}{B}\textcolor{red}{C}\textcolor{blue}{C}}$&$\mathtt{\textcolor{blue}{ABC}}\to \mathtt{\textcolor{red}{BAC}}$&213&$\mathtt{\textcolor{red}{AB}\textcolor{blue}{AB}\textcolor{red}{C}\textcolor{blue}{C}}$&$\mathtt{\textcolor{blue}{ABC}}\to \mathtt{\textcolor{red}{ABC}}$&123\\\hline
		
	\end{tabular}\vskip0.5cm
\caption{The canonical word $\mathtt{ABABCC}$ and all its decompositions leading to two neighboring permutations $123$ and $213$ in the graph $G'_3$.}\label{Table ABABCC}
\end{center}
\end{table}

\begin{table}
	\begin{tabular}{|l|l||l|l||l|l|}\hline
		word&permutations&word&permutations&word&permutations\\\hline
		$\mathtt{AABCBC}$&$123,132$&$\mathtt{ABABCC}$&$123,213$&$\mathtt{ABCCAB}$&$231,321$\\\hline
		$\mathtt{AABCCB}$&$132$&$\mathtt{ABCABC}$&$123,231$&$\mathtt{ABCCBA}$&$321$\\\hline
		$\mathtt{AABBCC}$&$123$&$\mathtt{ABCACB}$&$132,231,321$&$\mathtt{ABBACC}$&$213$\\\hline
		$\mathtt{ABACBC}$&$123,132,213,231$&$\mathtt{ABCBAC}$&$213,231,321$&$\mathtt{ABBCAC}$&$213,231$\\\hline
		$\mathtt{ABACCB}$&$132,231$&$\mathtt{ABCBCA}$&$231,321$&$\mathtt{ABBCCA}$&$231$\\\hline
	\end{tabular}\vskip0.5cm
	\caption{Permutation neighborhoods of canonical words in the graph $G'_3$.}\label{Table G'_3}
\end{table}

In Table \ref{Table G'_3} we see all other neighborhoods in the graph $G'_3$ obtained in this way. A picture of the actual graph is presented in Figure \ref{Graph G'_3}. Notice that in $S_3$ there are four convolutions and one pair of mutually inverse cycles, namely $231$ and $312$. We picked the former permutation to the set $S'_3$. In Table \ref{Table G'_3} we see that there are five words with single neighbors exhausting all elements of $S'_3$. So the only minimum covering set is the whole $S'_3$ and therefore $m_3(3)=5$.

\begin{figure}[ht]
	
	\begin{center}
		
		\resizebox{15cm}{!}{
			
			\includegraphics{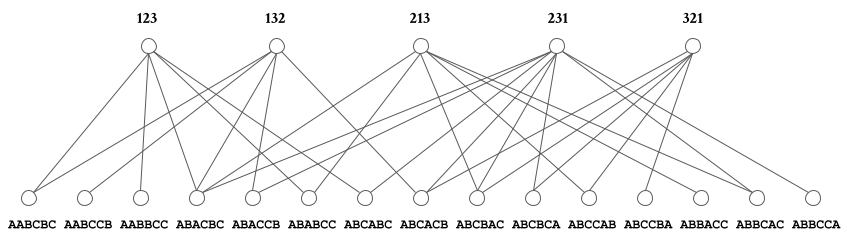}
			
		}
		\caption{Graph $G'_3$ drawn from Table \ref{Table G'_3}.}
		\label{Graph G'_3}
	\end{center}
\end{figure}

In case of $k=4$ the situation is slightly more nuanced. By computer search we have found a minimal covering set consisting of $14$ permutations (see Table \ref{Table 14 permutations}), which proves that $m_4(4)=14$. It has intriguing structure, namely, there are six inversions plus identity (in the first column of Table \ref{Table 14 permutations}), and two other convolutions, plus four $3$-cycles, plus one $4$-cycle (in the second column of Table \ref{Table 14 permutations}). Notice that all permutations (up to inverses) having a fixed point are present in this set.

Let us also remark that canonical words that are shuffle squares have simple characterization.

\begin{proposition}
	A canonical word $W\in C_k$ is a shuffle square if and only if it has no subword of the form $\mathtt{XYYX}$, where $\mathtt{X}$ and $\mathtt{Y}$ are single letters. Moreover, the number of shuffle squares in $C_k$ is equal to the Catalan number $\frac{1}{k+1}\binom{2k}{k}$.
\end{proposition}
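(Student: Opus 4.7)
The plan is to identify each canonical word $W \in C_k$ with a perfect matching on $[2k]$: pair up the two positions at which each letter occurs. This is a bijection between $C_k$ and perfect matchings of $[2k]$, since any matching determines a unique word in $C_k$ by labelling the arcs $\mathtt{A}, \mathtt{B}, \mathtt{C}, \ldots$ in the order of their smaller endpoints. Under this identification, the factor $\mathtt{XYYX}$ (which forces $\mathtt{X} \ne \mathtt{Y}$, since every letter appears exactly twice) corresponds precisely to a pair of arcs in which $\{p_\mathtt{Y}, q_\mathtt{Y}\}$ is strictly nested inside $\{p_\mathtt{X}, q_\mathtt{X}\}$. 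Thus the structural claim reduces to: $W$ is a shuffle square if and only if its associated matching has no nested pair.

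For the easy direction, assume the matching has no nesting. Form $U_1$ from the first occurrences of each letter and $U_2$ from the second occurrences, each read in left-to-right position order. The absence of nesting gives $p_\mathtt{X} < p_\mathtt{Y}$ iff $q_\mathtt{X} < q_\mathtt{Y}$ for every pair of letters, so $U_1$ and $U_2$ list the same letters in the same order, exhibiting $W$ as a shuffle square with $U_1 = U_2$.

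For the other direction, suppose $W = U \sqcup U$ with $U = u_1 \cdots u_k$; since each letter occurs twice in $W$ and once in each copy, $U$ is a permutation of the alphabet. Letter $u_i$ from the first copy sits at some position $s_i$ in $W$ and from the second copy at $t_i$, with both sequences $(s_i)$ and $(t_i)$ strictly increasing. If a nested pair $p_\mathtt{X} < p_\mathtt{Y} < q_\mathtt{Y} < q_\mathtt{X}$ existed, with $\mathtt{X} = u_i$ and $\mathtt{Y} = u_j$, then $\{s_j, t_j\} = \{p_\mathtt{Y}, q_\mathtt{Y}\}$ sits strictly inside the open interval $(p_\mathtt{X}, q_\mathtt{X})$, while $\{s_i, t_i\} = \{p_\mathtt{X}, q_\mathtt{X}\}$; a short case check on $i < j$ versus $i > j$ contradicts the monotonicity of $(s_i)$ and $(t_i)$. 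This is the only step that needs a little care, and it is essentially an order argument.

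For the enumeration, we invoke the classical fact that non-nesting perfect matchings of $[2k]$ are counted by the Catalan number. A quick proof: in any such matching, listing the left endpoints as $\ell_1 < \cdots < \ell_k$ and the right endpoints as $r_1 < \cdots < r_k$ forces $\ell_i$ to be paired with $r_i$ for every $i$ (otherwise a nesting is produced); hence such matchings are in bijection with the admissible left/right patterns, i.e.\ with Dyck words of length $2k$. This yields the count $\frac{1}{k+1}\binom{2k}{k}$ and completes the proof.
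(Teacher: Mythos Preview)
Your proof is correct. One terminological slip: you write ``the factor $\mathtt{XYYX}$'' where the statement (and your own argument) require \emph{subword}; the positions need not be contiguous, and indeed your identification with a nested pair of arcs is exactly the subword reading.

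Your route differs from the paper's. The paper first observes that in any shuffle decomposition of a canonical word the common half must be the ordered alphabet $\mathtt{A}_1\mathtt{A}_2\cdots\mathtt{A}_k$, then handles the two implications separately: a short colour-case analysis shows that $\mathtt{XYYX}$ obstructs such a decomposition, and the converse is done by induction on $k$, deleting the two copies of the last letter $\mathtt{A}_k$. The Catalan count is obtained by colouring first occurrences blue and reading off a Dyck word directly. You instead pass once and for all to the perfect-matching model and run both implications as pure order arguments on the increasing sequences $(s_i)$ and $(t_i)$; in particular you never invoke the lexicographic normalisation of canonical words, so your forward implication actually proves the slightly stronger fact that \emph{any} double-occurrence word that is a shuffle square is nesting-free. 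Your enumeration then quotes the standard bijection between non-nesting matchings and Dyck words. The paper's argument is more hands-on and self-contained; yours is more structural and unified, and makes transparent why Catalan numbers appear (non-nesting matchings are a textbook Catalan family).
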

\begin{proof}
	
	Let $C_k$ be the set of canonical words over alphabet $\mathbb{A}=\{\mathtt{A}_1,\mathtt{A}_2,\ldots,\mathtt{A}_k\}$. First notice that any shuffle square in $C_k$ must be splittable into blue and red copies of the word $U_k=\mathtt{A}_1\mathtt{A}_2\cdots \mathtt{A}_k$, by the lexicographic condition in the definition of canonical words. Notice also that we may always find blue and red copies of $U_k$ in $W$ so that the number of blue letters is not lesser than the number of red letters in every prefix of $W$. This can be done by coloring blue the first occurrence of each letter $\mathtt{A_i}$. This establishes a one-to-one correspondence between the set of shuffle squares in $C_k$ and the set of \emph{Dyck words} on the alphabet of two colors $\{\mathtt{b},\mathtt{r}\}$. For instance, the canonical word $\mathtt{AABCBC}$ can be split to $\color{blue}\mathtt{A}\color{red}\mathtt{ABC}\color{blue}\mathtt{BC}$, which gives the color pattern $\mathtt{brrrbb}$ (which is \emph{not} a Dyck word), but another splitting $\color{blue}\mathtt{A}\color{red}\mathtt{A}\color{blue}\mathtt{BC}\color{red}\mathtt{BC}$ gives the correct color pattern $\mathtt{brbbrr}$. Thus, the second assertion of the proposition is proved.
	
	For the first assertion, let $W$ be a shuffle square in $C_k$ and assume it has a subword $\mathtt{XYYX}$, with $\mathtt{X},\mathtt{Y}\in \mathbb{A}$. In a blue-red decomposition of $W$ there are only two possibilities of color placement on this subword (up to symmetry): $\color{blue}\mathtt{X}\mathtt{Y}\color{red}\mathtt{YX}$ or $\color{blue}\mathtt{X}\color{red}\mathtt{Y}\color{blue}\mathtt{Y}\color{red}\mathtt{X}$. In both cases we get a subword $\color{blue}\mathtt{XY}$ in blue and $\color{red}\mathtt{YX}$ in red. So, the whole blue and red subwords of $W$ cannot be the same.
	
	On the other hand, suppose that $W\in C_k$ is a canonical word not containing any subwords of the forbidden form. In particular, between two occurrences of the last letter $\mathtt{A}_k$, each other letter occurs once. 
	Then, we may delete the last pair of letters from $W$, apply induction to the shorter word $W'\in C_{k-1}$, and then append the deleted letters to the blue and red subwords of $W'$ accordingly.
\end{proof}
In graph theoretic terminology the above result states that the vertex corresponding to the identity permutation in the graph $G'_k$ has degree equal to the Catalan number $\frac{1}{k+1}\binom{2k}{k}$. As seen in Figure \ref{Graph G'_3} this is not the maximum vertex degree in $G'_3$. What is the maximum vertex degree in $G'_k$?
\begin{table}
		\begin{tabular}{|l||l|}\hline
			$1234=\text{id}$&$2143=(12)(34)$\\\hline
			$2134=(12)$&$4321=(14)(23)$\\\hline
			$3214=(13)$&$1342=(234)$\\\hline
			$4231=(14)$&$3241=(134)$\\\hline
			$1324=(23)$&$2431=(124)$\\\hline
			$1432=(24)$&$2314=(123)$\\\hline
			$1243=(34)$&$2341=(1234)$\\\hline
		\end{tabular}\vskip0.5cm
	\caption{Minimal covering set of $14$ permutations showing that $m_4(4)=14$.}\label{Table 14 permutations}
\end{table}

\subsection{The number of binary shuffle squares with two $\mathtt{1}$'s}\label{bss}
	
In this subsection we investigate shuffle squares with fixed number of $\mathtt{1}$'s. Let $|A|$ be the number of elements of the set $A$. Let $\mathtt{B}(2n,2k)$ be the set of all binary shuffle squares of the length $2n$ with the number of $\mathtt{1}$'s equal to $2k$. For example,
\begin{align*}
	\mathtt{B}(2,2)&=\{\mathtt{11}\},\\
	\mathtt{B}(4,2)&=\{\mathtt{0011},\,\mathtt{0101},\,\mathtt{1010},\,\mathtt{1100}\}.
\end{align*}
Let us notice that $|\mathtt{B}(2n,0)|=|\mathtt{B}(2n,2n)|=1$ and that
$$|\mathtt{B}(2n,2k)|=|\mathtt{B}(2n,2n-2k)|$$
for all $n,k$ such that $0\leqslant k\leqslant n$.
	
\begin{theorem}\label{ThChar}
	Let $W$ be a binary word of length $2n$ with exactly two $\mathtt{1}$'s. The~word $W$ is an element of $\mathtt{B}(2n,2)$ if and only if it contains one of the following factors:
	\begin{itemize}
		\item a prefix $\mathtt{0\cdots011}$ which has even length,
		\item a word $\mathtt{10\cdots01}$ of length $l$ such that $3\leqslant l\leqslant n+1$.
	\end{itemize}
\end{theorem}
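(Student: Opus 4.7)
The plan is to parametrize $W$ by the positions of its two $\mathtt{1}$'s. Writing $W=\mathtt{0}^{z_1}\mathtt{1}\mathtt{0}^{z_2}\mathtt{1}\mathtt{0}^{z_3}$ with $z_1+z_2+z_3=2n-2$, any shuffle square decomposition of $W$ must split it into two identical copies of the word $\mathtt{0}^a\mathtt{1}\mathtt{0}^{n-1-a}$ for some common $a\in\{0,\ldots,n-1\}$, simply because each of the two subwords has length $n$ and contains exactly one $\mathtt{1}$. So I would reduce the problem to the question: for which triples $(z_1,z_2,z_3)$ does such an $a$ exist?

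To pin this down, I would count zeros block by block. If the subword containing the first $\mathtt{1}$ absorbs $a$ zeros from the initial run, then forcing the second subword to also have the shape $\mathtt{0}^a\mathtt{1}\mathtt{0}^{n-1-a}$ uniquely determines the remaining contributions: the first subword must take $z_1+z_2-2a$ zeros from the middle block and $z_3-(n-1-a)$ zeros from the final block. Requiring that all six block contributions be nonnegative and not exceed their block sizes collapses to requiring that the integer interval
$$\bigl[\max(\lceil z_1/2\rceil,\,z_1+z_2-n+1),\ \min(z_1,\,\lfloor(z_1+z_2)/2\rfloor)\bigr]$$
be nonempty.

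The main (though modest) obstacle will be the parity casework that simplifies this nonemptiness condition. I expect to show, using $z_1+z_2+z_3=2n-2$, that the interval is nonempty precisely when $z_2\leqslant n-1$ and, in addition, $z_2\geqslant 1$ whenever $z_1$ is odd; the other potential inequalities among the four endpoints turn out to be automatic. Keeping track of $\lceil\cdot\rceil$ versus $\lfloor\cdot\rfloor$ across the two parities of $z_1$ is the only fiddly part, but it is a short direct check.

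Finally I would translate the arithmetic condition into the claimed factor condition. If $z_2=0$ the two $\mathtt{1}$'s are adjacent, so $W$ begins with the prefix $\mathtt{0}^{z_1}\mathtt{11}$ of length $z_1+2$, and ``$z_1$ even'' is exactly ``this prefix has even length''. If $z_2\geqslant 1$ the unique factor of $W$ spanning both $\mathtt{1}$'s is $\mathtt{1}\mathtt{0}^{z_2}\mathtt{1}$ of length $l=z_2+2\geqslant 3$, and the condition $z_2\leqslant n-1$ is precisely $l\leqslant n+1$. The two sub-cases are mutually exclusive and together cover all words with two $\mathtt{1}$'s, so the resulting disjunction matches the two bullets in the theorem statement.
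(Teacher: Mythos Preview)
Your plan is correct. The parametrization $W=\mathtt{0}^{z_1}\mathtt{1}\mathtt{0}^{z_2}\mathtt{1}\mathtt{0}^{z_3}$ and the reduction to finding an integer $a$ in the interval
\[
\bigl[\max(\lceil z_1/2\rceil,\,z_1+z_2-n+1),\ \min(z_1,\,\lfloor(z_1+z_2)/2\rfloor)\bigr]
\]
is exactly right; the two omitted constraints $a\geqslant 0$ and $a\leqslant n-1$ are indeed subsumed by the four you kept (since $\lfloor(z_1+z_2)/2\rfloor\leqslant(2n-2)/2=n-1$). Checking nonemptiness gives precisely the condition $z_2\leqslant n-1$ together with ($z_1$ even or $z_2\geqslant 1$), and your translation to the two bullets is accurate.

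Your route differs from the paper's. The paper splits the argument in two: for sufficiency it exhibits explicit decompositions $W'=W''$ in each of several parity sub-cases, and for necessity it gives two short ad hoc arguments (if $z_2=0$ and $z_1$ is odd, the two halves cannot match their leading zero counts; if $z_2\geqslant n$, then $z_1+z_3\leqslant n-2$ forces each half to have length at most $n-1$). Your approach is a single unified arithmetic characterization that handles both directions at once, at the cost of a small floor/ceiling case check. The paper's necessity argument is slightly slicker than your interval analysis, but your method avoids the parity casework in the sufficiency direction entirely and makes the structure of the answer (an interval of admissible $a$'s) more transparent.
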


\begin{proof}
	
	Firstly, we will show that the words with proposed factors are in fact shuffle squares. If $W$ has the first type of factor, namely $$W=\mathtt{0}^{2p}\mathtt{11}\mathtt{0}^{2s},$$ then $W$ can be split into two identical disjoint subwords $$W'=W''=\mathtt{0}^p\mathtt{1}\mathtt{0}^s.$$  
	For $W$ containing the factor $\mathtt{10\cdots01}$ we consider two cases, depending on the parity of the~number of $\mathtt{0}$'s between $\mathtt{1}$'s.
	\begin{dupa}[$\displaystyle W=\mathtt{0}^{2p}\mathtt{1}\mathtt{0}^{2k}\mathtt{1}\mathtt{0}^{2s}$]
		\emph{If $p\geqslant k$, then $W$ can be split into two indentical disjoint words $$W'=W''=\mathtt{0}^{p+k}\mathtt{1}\mathtt{0}^s.$$ If $p<k$, then
		$$W'=W''=\mathtt{0}^p\mathtt{1}\mathtt{0}^{k+s}.$$}
	\end{dupa}

\begin{dupa}[$W=\mathtt{0}^{2p+1}\mathtt{1}\mathtt{0}^{2k+1}\mathtt{1}\mathtt{0}^{2s-2}$ or $\displaystyle W=\mathtt{0}^{2p-2}\mathtt{1}\mathtt{0}^{2k+1}\mathtt{1}\mathtt{0}^{2s+1}$]
	\emph{Let us notice that it is sufficient to consider only $W$ with odd number of $\mathtt{0}$'s before the~first~$\mathtt{1}$, since the argument in the remaining case is analogous. For $p\geqslant k$ the subwords are $$W'=W''=\mathtt{0}^{p+k+1}\mathtt{1}\mathtt{0}^{s-1},$$
	and for $p<k$ we have
	$$W'=W''=\mathtt{0}^p\mathtt{1}\mathtt{0}^{k+s}.$$}
\end{dupa}
Thus, in both cases we are able to split the considered word into two identical disjoint subwords, so the word is a shuffle square.

Now let us show that $\mathtt{B}(2n,2)$ does not contain any other shuffle squares. If $W$ contains a~prefix $\mathtt{0\cdots011}$ of odd length, then any two disjoint subwords of $W$ of length $n$ contain prefixes with different numbers of $\mathtt{0}$'s before the first occurrence of $\mathtt{1}$, and so they are not identical. For the remaining case let us assume that $$W=\mathtt{0}^p\mathtt{1}\mathtt{0}^k\mathtt{1}\mathtt{0}^s,$$ where $k\geqslant n$ and that $W$ can be split into two identical subwords $$W'=W''=\mathtt{0}^{p'}\mathtt{1}\mathtt{0}^{s'}.$$ The length of $W$ is $2n$, so from $k\geqslant n$ we get $p+s\leqslant n-2$. Let us notice that $p'\leqslant p$ and $s'\leqslant s$. Thus $p'+s'\leqslant n-2$ and finally the length of $W'$ is not greater than $(n-1)$ which results in a contradiction with the fact that the length of $W'$ and $W''$ equals to $n$.
\end{proof}

\begin{corollary}\label{CorTrian}	
	The number of all binary shuffle squares of length $2n$ with exactly two $\mathtt{1}$'s equals $$\frac{3n(n-1)}2+1,$$ for every $n\geqslant 1$.	
\end{corollary}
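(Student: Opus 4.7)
The plan is to count the words characterized by Theorem \ref{ThChar} directly. First, I would parametrize every binary word of length $2n$ with exactly two $\mathtt{1}$'s uniquely as $W = \mathtt{0}^p\mathtt{1}\mathtt{0}^k\mathtt{1}\mathtt{0}^s$ with $p+k+s = 2n-2$ and $p,k,s \geqslant 0$. Then the two bullet-points of Theorem \ref{ThChar} translate into two disjoint conditions on the triple $(p,k,s)$: the prefix condition $\mathtt{0}^{\text{even}}\mathtt{11}$ forces $k=0$ and $p$ even, while the factor condition $\mathtt{10}^k\mathtt{1}$ of length $3 \leqslant k+2 \leqslant n+1$ forces $1 \leqslant k \leqslant n-1$. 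Disjointness is automatic since the two sets of triples are distinguished by whether $k=0$ or $k\geqslant 1$, and exhaustiveness among shuffle squares is exactly the content of Theorem \ref{ThChar}.

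Next I would count each case. In the first case, $p$ ranges over the even integers $0,2,\dots,2n-2$ (with $s=2n-2-p$ then determined), giving $n$ words. In the second case, for each fixed $k \in \{1,2,\dots,n-1\}$ the number of admissible pairs $(p,s)$ with $p+s = 2n-2-k$ is $2n-1-k$, so the total is
\[
\sum_{k=1}^{n-1}(2n-1-k) \;=\; \sum_{j=n}^{2n-2} j \;=\; \frac{(2n-2)(2n-1)}{2} - \frac{(n-1)n}{2} \;=\; \frac{(n-1)(3n-2)}{2}.
\]

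Finally I would add the two contributions:
\[
n + \frac{(n-1)(3n-2)}{2} \;=\; \frac{2n + 3n^2 - 5n + 2}{2} \;=\; \frac{3n(n-1)}{2} + 1,
\]
which is the claimed formula; a quick sanity check at $n=1,2$ (giving $1$ and $4$, matching $|\mathtt{B}(2,2)|$ and $|\mathtt{B}(4,2)|$ displayed above) confirms it.

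There is no real obstacle here: the only thing to be careful about is to make sure that the two families in Theorem \ref{ThChar} are counted without double counting and without omission. Since the parametrization $(p,k,s)$ is a bijection with the set of all words having two $\mathtt{1}$'s, and the two conditions correspond to disjoint ranges of $k$, the bookkeeping is transparent and the computation reduces to a single arithmetic-series evaluation.
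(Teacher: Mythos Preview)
Your proof is correct and follows essentially the same approach as the paper: both arguments parametrize the words via Theorem~\ref{ThChar} into the two disjoint families ($k=0$ with $p$ even, versus $1\leqslant k\leqslant n-1$), count $n$ words in the first family and $\sum_{k=1}^{n-1}(2n-1-k)$ in the second, and sum to obtain the centered triangular number. Your explicit remark that the $(p,k,s)$ parametrization is a bijection and that the two cases are distinguished by whether $k=0$ makes the disjointness slightly more transparent than in the paper, but the substance is identical.
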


\begin{proof}
	The statement obviously holds for $n=1$. Let us assume $n\geqslant2$. Theorem \ref{ThChar} states that there are two separate types of shuffle squares of length $2n$ which contain exactly two $\mathtt{1}$'s:
	\begin{itemize}
		\item $\mathtt{0}^{2k}\mathtt{11}\mathtt{0}^{2n-2k-2}$, for $0\leqslant k\leqslant n$.
		
		It is easy to show that there are exactly $n$ such shuffle squares.
		
		\item $\mathtt{0}^p\mathtt{1}\mathtt{0}^{k}\mathtt{10}^{2n-p-k-2}$, for $1\leqslant k\leqslant n-1$ and $p\leqslant2n-k-2$.
		
		Let us notice that for the fixed $k$ the number of shuffle squares of this type is equal to $(2n-k-1)$. Thus the number of all such shuffle squares for a given $n$ is equal to $$\sum\limits_{k=1}^{n-1}(2n-k-1).$$
	\end{itemize}
	Finally, we get
	$$|\mathtt{B}(2n,2)|=n+\sum\limits_{k=1}^{n-1}(2n-k-1)=\frac{3n(n-1)}2+1.$$
\end{proof}

Let us notice that the obtained numbers of those particular shuffle squares form the sequence A005448 of centered triangular numbers in the OEIS \cite{OEIS}. Initial terms of this sequence can be found in the third row of Table \ref{TabNum}. The last row of presented table contains initial terms of the sequence A191755 \cite{OEIS2} of the number of different binary shuffle squares of the length $2n$. It would be of great value to find at least the asymptotic order of other sequences $|\mathtt{B}(2n,2k)|$ for fixed $k\in \mathbb{N}$.

\begin{table}
\begin{tabular}{|c|c|c|c|c|c|c|c|c|c|c|}\hline
\backslashbox{$2k$}{$2n$}&2&4&6&8&10&12&14&16&18&20\\\hline
0&1&1&1&1&1&1&1&1&1&1\\\hline
2&1&4&10&19&31&46&64&85&109&136\\\hline
4&0&1&10&42&128&306&633&1169&1997&3199\\\hline
6&0&0&1&19&128&562&1853&5041&11914&25331\\\hline
8&0&0&0&1&31&306&1853&8040&27965&82208\\\hline
10&0&0&0&0&1&46&633&5041&27965&120718\\\hline
12&0&0&0&0&0&1&64&1169&11914&82208\\\hline
14&0&0&0&0&0&0&1&85&1997&25331\\\hline
16&0&0&0&0&0&0&0&1&109&3199\\\hline
18&0&0&0&0&0&0&0&0&1&136\\\hline
20&0&0&0&0&0&0&0&0&0&1\\\hline
$\sum$&2&6&22&82&320&1268&5102&20632&83972&342468\\\hline
\end{tabular}\vskip0.5cm
\caption{$|\mathtt{B}(2n,2k)|$ for small values of $k$ and $n$. The last row contains the~number of all shuffle squares for given values of $2n$.}\label{TabNum}
\end{table}
 
\section{Final remarks}\label{dis}
\begin{figure}[ht]
	
	\begin{center}
		
		\resizebox{7cm}{!}{
			
			\includegraphics{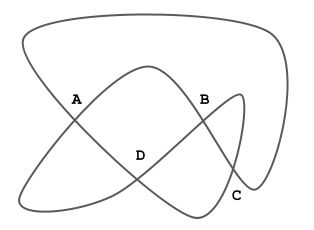}
			
		}
		\caption{A curve $C$ with a Gauss code $W=\mathtt{ABCADCBD}$.}
		\label{Gauss}
	\end{center}
\end{figure}

Concluding the paper, let us mention some possible applications of generalized shuffle squares in seemingly distant fields. Below we present three different topics in which canonical words play a crucial role.

\subsection{Gauss codes} Gauss \cite{Gauss} studied the following problem. Suppose that we are given a closed curve $C$ in the plane that crosses itself in a finite number of points (each point has multiplicity two). Then, denoting the crossing points by letters of some alphabet and traversing the curve in some direction gives a word in which every letter occurs exactly twice (see Figure \ref{Gauss}).

Nowadays, these words are called \emph{Gauss codes}. Notice that it is sufficient to restrict to canonical words (in our sense). Gauss himself found a necessary condition for a word to be such code, however the full characterization was presented much later (see \cite{ShtyllaTZ} for a~survey). It is natural to wonder what the minimal set of permutations is, covering the set of Gauss codes. It could also be interesting to investigate the related bipartite graph, that is, a~subgraph of $G'_k$ restricted to the set of Gauss codes.
\begin{figure}[ht]
	
	\begin{center}
		
		\resizebox{6cm}{!}{
			
			\includegraphics{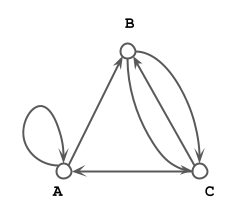}
			
		}
		\caption{Digraph $D_W$ for the word $W=\mathtt{AABCBC}$.}
		\label{Euler}
	\end{center}
\end{figure}
\subsection{DNA sequencing} One of the methods of reconstructing a long DNA string from its short substrings is \emph{sequencing by hybridization} (see \cite{ArratiaBCS}). For a given string, the number of possible reconstructions is determined by the pattern of repeated substrings. If each of $n$ substrings is assumed to occur at most twice in the original string, then the \emph{pattern of repeats} is a word $W$ of length $2n$ in which each of $n$ symbols occurs twice. Now the word $W$ induces a directed graph $D_W$, whose vertices are the letters corresponding to the substrings, while the arcs are defined by pairs of successive letters of $W$. For instance, if $W=\mathtt{AABCBC}$, then the set of arcs consists of $\mathtt{AA}$, $\mathtt{AB}$, $\mathtt{BC}$, $\mathtt{CB}$, $\mathtt{BC}$, and $\mathtt{CA}$ (see Figure \ref{Euler}). The number of possible reconstructions of the original DNA string is thus equal to the number of Euler circuits of the graph $D_W$, called the \emph{Euler number} of the word $W$ (see \cite{ArratiaBCS}).

It seems plausible that splitting properties of canonical words are somehow encoded in these digraphs. It is not hard to verify, for instance, that canonical words with Euler number one coincide with shuffle squares. What are the canonical words with Euler number two? Can we detect them in our graph $G'_k$?

\subsection{Circle graphs} Another intriguing structures related to canonical words are \emph{circle graphs}. These are intersection graphs of chords of a circle with pairwise disjoint ends. This class of graphs has surprisingly rich structure and many intriguing connections and applications (e.g. DNA sequencing \cite{ArratiaBCS} or logic \cite{Courcelle}).
\begin{figure}[ht]
	
	\begin{center}
		
		\resizebox{13cm}{!}{
			
			\includegraphics{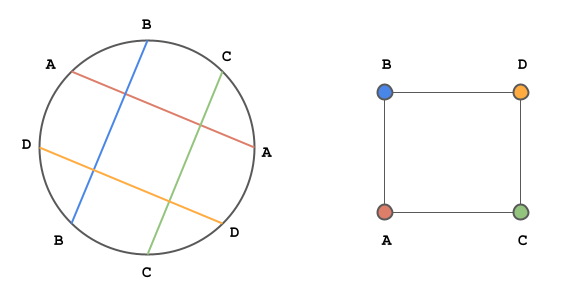}
			
		}
		\caption{The chord diagram of the word $\mathtt{ABCADCBD}$ and its circle graph.}
		\label{Circle Graph}
	\end{center}
\end{figure}
Denoting the ends of a chord by same letters leads to the canonical word (up to circular equivalence).

Again, one may hope that the structure of chord diagrams and circle graphs of canonical words reflects somehow their splitting properties. Perhaps the methods applied for studying these structures turn out to be fruitful there, too. For instance, our main Conjecture \ref{Conjecture Main} states that for every even binary word of length $2n$ written on a circle, there is a chord diagram, with each chord joining two $\mathtt{0}$'s or two $\mathtt{1}$'s, such that the corresponding canonical word is a~shuffle square.

	\appendix
	
	\section{List of shuffle ant-squares for small $n$}\label{apA}

The shuffle anti-square is an even word $W$ such that $W$ is not a shuffle square, and for every words $P$ and $S$ such that $W=PS$, the word $SP$ is not a shuffle square. Table $T1$ contains an information, that up to the cyclic shifts, reverses, and permutations of the alphabet, there is exactly 1 shuffle anti-square of length 24, 26 shuffle anti-squares of length 26 and 103 shuffle anti-squares of length 28. We present those words below.

The only shuffle anti-square of length 24 is
$$\mathtt{000001001111000011101111}.$$ 

Shuffle anti-squares of length 26 are
$$\begin{array}{cc}
\mathtt{00000001001110010000110111},&\mathtt{00000001001110010001100111},\\
\mathtt{00000001110100100010010111},&\mathtt{00000011010011011000011111},\\
\mathtt{00000011010110011000011111},&\mathtt{00000011010110101000011111},\\
\mathtt{00000011011010101000011111},&\mathtt{00000100011101000011011111},\\
\mathtt{00000100011110000011011111},&\mathtt{00000100011110000110101111},\\
\mathtt{00000100111100001101011011},&\mathtt{00000100111110001110011111},\\
\mathtt{00000101011010101000011111},&\mathtt{00000101011110100011101111},\\
\mathtt{00000101011110100011110111},&\mathtt{00000111100110010110011111},\\
\mathtt{00001111010010000111110111},&\mathtt{00001111100010000111110111},\\
\mathtt{00001111100010000111111011},&\mathtt{00001111101010001001110001},\\
\mathtt{00001111101100100111100101},&\mathtt{00001111101101000111110001},\\
\mathtt{00001111101111000111110101},&\mathtt{00001111101111000111111001},\\
\mathtt{00001111110111000111111001},&\mathtt{00001111111000110110111011}.
\end{array}$$

\newpage

Shuffle anti-squares of length 28 are

$$\begin{array}{cc}
\mathtt{0000000010001110010000110111},&\mathtt{0000000010011100100000110111},\\
\mathtt{0000000100001110000001101111},&\mathtt{0000000100011110000011101111},\\ 
\mathtt{0000000100100110100000110111},&\mathtt{0000000100101100100000110111},\\ 
\mathtt{0000000100110010100000101111},&\mathtt{0000000100110100100000101111},\\ 
\mathtt{0000000100111100000011101111},&\mathtt{0000000100111100000111001111},\\
\mathtt{0000000100111100010001101111},&\mathtt{0000000100111100100001101111},\\
\mathtt{0000000100111100100011001111},&\mathtt{0000000101111000010011011011},\\
\mathtt{0000000110010011011000011111},&\mathtt{0000000110010011101000011111},\\
\mathtt{0000000110010110011000011111},&\mathtt{0000000110010110101000011111},\\
\mathtt{0000000110011001011000011111},&\mathtt{0000000110011001100010011111},\\
\mathtt{0000000110011001100100011111},&\mathtt{0000000110011001101000011111},\\
\mathtt{0000000110011010011000011111},&\mathtt{0000000110011010101000011111},\\
\mathtt{0000000110011100011000011111},&\mathtt{0000000110011100101000011111},\\
\mathtt{0000000110100110110000011111},&\mathtt{0000000110101100000110011111},\\
\mathtt{0000000110101100110000011111},&\mathtt{0000000110101101010000011111},\\
\mathtt{0000000110110101010000011111},&\mathtt{0000000110111001100011000111},\\
\mathtt{0000000111001000011010011111},&\mathtt{0000000111001001001010011111},\\
\mathtt{0000000111001010001010011111},&\mathtt{0000000111001010001100011111},\\
\mathtt{0000000111001100001010011111},&\mathtt{0000000111001100001011001111},\\
\mathtt{0000000111001100001100011111},&\mathtt{0000000111010001001010011111},\\
\mathtt{0000000111010010001010011111},&\mathtt{0000000111010100001010011111},\\
\mathtt{0000000111100100100010011111},&\mathtt{0000000111101001000100101111},\\
\mathtt{0000001000011110000011011111},&\mathtt{0000001000111001000011011111},\\
\mathtt{0000001000111010000011011111},&\mathtt{0000001000111010000101101111},\\
\mathtt{0000001000111100000011011111},&\mathtt{0000001000111100000011101111},\\
\mathtt{0000001000111100000101101111},&\mathtt{0000001000111100000110101111},\\ 
\mathtt{0000001000111110000111011111},&\mathtt{0000001001011100000011101111},\\
\mathtt{0000001001011110100011101111},&\mathtt{0000001001101100000110101111},\\
\mathtt{0000001001111000001101011011},&\mathtt{0000001001111000001110011101},\\
\mathtt{0000001010101111000011110111},&\mathtt{0000001010110101010000011111},\\
\mathtt{0000001010111011000011101111},&\mathtt{0000001010111101000011101111},\\
\mathtt{0000001010111101000011110111},&\mathtt{0000001011011100111000011111},\\
\mathtt{0000001011101011000011011111},&\mathtt{0000001011101101000011011111},\\
\mathtt{0000001011110000110001110011},&\mathtt{0000001011110000110010110011},\\
\mathtt{0000001011110000110011010011},&\mathtt{0000001011111000011101100111},\\
\mathtt{0000001011111001000011000111},&\mathtt{0000001011111001000101000111},\\
\mathtt{0000001011111001000110000111},&\mathtt{0000001011111001001001000111},\\
\mathtt{0000001011111001001010000111},&\mathtt{0000001011111001001101001111},\\
\mathtt{0000001100011100000011011111},&\mathtt{0000001100101100110000011111},\\
\mathtt{0000001101111000100010010111},&\mathtt{0000001101111000100010100111},\\
\end{array}$$
$$\begin{array}{cc}
\mathtt{0000001101111010000011000111},&\mathtt{0000001101111100000100001111},\\
\mathtt{0000001101111100000101000111},&\mathtt{0000001101111100000110000111},\\
\mathtt{0000001101111100001000001111},&\mathtt{0000001101111100001001000111},\\
\mathtt{0000001101111100001010000111},&\mathtt{0000010000111010000011011111},\\
\mathtt{0000010000111010000101011111},&\mathtt{0000010000111100000101011111},\\
\mathtt{0000010000111100000110011111},&\mathtt{0000010000111110101000001111},\\
\mathtt{0000010001011100000110011111},&\mathtt{0000010001101100000101011111},\\
\mathtt{0000010001101100000110011111},&\mathtt{0000010001110100000101011111},\\
\mathtt{0000010010101100000110011111},&\mathtt{0000010011010110010000011111},\\
\mathtt{0000010011111000001100110011},&\mathtt{0000010011111000100001100111},\\
\mathtt{0000010011111001000001101011},&\mathtt{0000010101011101000011101111},\\
\mathtt{0000011001100110001000011111}.
\end{array}$$

	\bigskip
	\hrule
	\bigskip
	
	\noindent 2020 {\it Mathematics Subject Classification}: 68R15.
	
	\noindent \emph{Keywords: } combinatorics on words, shuffle squares, reverse shuffle squares.
	
	\bigskip
	\hrule
	\bigskip

\end{document}